\DeclareSymbolFont{cyrletters}{OT2}{wncyr}{m}{n}
\DeclareMathSymbol{\Sha}{\mathalpha}{cyrletters}{"58}
\newcommand{\F}{\mathbb{F}}
\DeclareMathOperator{\Gal}{Gal}
\newcommand{\To}{\longrightarrow}
\newcommand{\orderofN}{b}
\newtheorem{theorem}{Theorem}[section]
\newtheorem{lemma}[theorem]{Lemma}
\newtheorem{corollary}[theorem]{Corollary}
\theoremstyle{definition}
\theoremstyle{remark}
\newtheorem{remark}[theorem]{Remark}
\g@addto@macro\bfseries{\boldmath} 
\begin{document}

\title{The proportion of derangements characterizes the symmetric and alternating groups}

\subjclass[2020]{Primary 20B35; Secondary 11A63, 14E20, 14G15, 20B10.}

\keywords{Derangement, symmetric group, permutation group, monodromy group}

\author{Bjorn Poonen}
\thanks{B.P.\ was supported in part by National Science Foundation grants DMS-1601946 and DMS-2101040 and Simons Foundation grants \#402472 and \#550033.  K.S.\ was supported by NCCR SwissMAP of the SNSF}
\address{Department of Mathematics, Massachusetts Institute of Technology, Cambridge, MA 02139-4307, USA}
\email{poonen@math.mit.edu}
\urladdr{\url{http://math.mit.edu/~poonen/}}

\author{Kaloyan Slavov}
\address{Department of Mathematics, ETH Z\"{u}rich,
R\"{a}mistrasse 101, 8006 Z\"{u}rich, Switzerland
}
\email{kaloyan.slavov@math.ethz.ch}

\date{October 18, 2021}

\begin{abstract}
Let $G$ be a subgroup of the symmetric group $S_n$.  If the proportion of fixed-point-free elements in $G$ (or a coset) equals the proportion of fixed-point-free elements in $S_n$, then $G=S_n$. The analogue for $A_n$ holds if $n \ge 7$.  We give an application to monodromy groups. 
\end{abstract}

\maketitle

\section{Introduction}

\subsection{Derangements in permutation groups}

Motivated by an application to monodromy groups, we prove the following.

\begin{theorem}
\label{T:main}
Let $G$ be a subgroup of the symmetric group $S_n$ for some $n \ge 1$.
Let $C$ be a coset of $G$ in $S_n$.
If
\begin{equation}
\label{E:C}
\frac{|\{\sigma\in C : \sigma\ \textup{has no fixed points}\}|}{|C|}=\frac{|\{\sigma\in S_n : \sigma\ \textup{has no fixed points}\}|}{|S_n|},
\end{equation}
then $G=C=S_n$.
\end{theorem}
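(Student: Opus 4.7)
The plan is to transform \eqref{E:C} into a combinatorial identity on orbit counts and then analyze it.

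Write $C = \tau G$, and for $S \subseteq [n]$ set $C_S := \{\sigma \in C : \sigma|_S = \operatorname{id}\}$. The inclusion–exclusion formulas for the number of derangements in $C$ and in $S_n$ (where the analogous quantity $(S_n)_S$ has size $(n-|S|)!$) transform \eqref{E:C} into the single identity
\begin{equation*}
\sum_{k=0}^{n} \frac{(-1)^k}{k!}\,(f_k - 1) \;=\; 0, \qquad f_k \;:=\; \frac{k!}{|C|}\sum_{\substack{S \subseteq [n] \\ |S|=k}} |C_S|.
\end{equation*}
A short orbit–stabilizer computation shows $f_k$ equals the number of $G$-orbits on the set of ordered $k$-tuples of distinct elements of $[n]$ that are preserved setwise by $\tau$; in particular, each $f_k$ is a non-negative integer with $f_0 = 1$.

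I would first dispatch the subgroup case $C = G$ (i.e.\ $\tau = 1$), in which every orbit is preserved, so $f_k$ equals the total orbit count of $G$ on ordered $k$-tuples. Then $f_k \geq 1$ with equality iff $G$ is $k$-transitive, $f_n = [S_n : G]$, and the monotonicity $f_k \geq f_{k-1}$ follows from the $G$-equivariant surjection obtained by forgetting the last coordinate. The goal is to show that the displayed alternating identity together with these structural constraints forces $f_k = 1$ for every $k$, so that $G$ is $n$-transitive and hence $G = S_n$. For the general coset case, I would reduce to the subgroup case by passing to $\tilde G := \langle G, \tau\rangle$ and analyzing how $\tau$ permutes the $G$-orbits inside each $\tilde G$-orbit on ordered $k$-tuples, aiming to deduce $\tilde G = S_n$ and then $G = S_n$.

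The main obstacle is showing that the displayed alternating identity admits no nontrivial solutions in admissible sequences $(f_k)$: a priori, the terms $(-1)^k(f_k-1)/k!$ could conspire to cancel, since they do not have a fixed sign, and the values can genuinely coincide for nonsymmetric groups (e.g.\ for $G = A_n$ the discrepancy is only $\pm (n-1)/n!$, nonzero but small). I anticipate the crux of the proof is a delicate combinatorial argument exploiting the full growth behaviour of $(f_k)$ together with the index identity $f_n = [S_n : G]$, or possibly an invocation of classification-style results for highly transitive subgroups of $S_n$.
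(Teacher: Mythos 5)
Your reformulation via inclusion--exclusion is a reasonable starting point, but the proposal does not contain a proof: the step you yourself flag as ``the main obstacle'' --- showing that $\sum_{k\ge 1}\frac{(-1)^k}{k!}(f_k-1)=0$ admits no nontrivial admissible solutions --- is essentially the entire content of the theorem, and the structural constraints you list ($f_0=1$, integrality, $f_k\ge f_{k-1}$, $f_n=[S_n:G]$) are nowhere near sufficient to close it. After clearing denominators your identity is equivalent to the statement that the reduced denominator $d_n$ of $D_n/n!$ divides $|G|$, which is exactly the paper's starting point; ruling out proper subgroups from there requires genuine group-theoretic and number-theoretic input, not just combinatorics of the sequence $(f_k)$. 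The paper spends three sections on this: for primitive $G\ne A_n,S_n$ it combines the Praeger--Saxl bound $|G|<4^n$ with Okano's irrationality measure for $e$ to force $n\le 41$ and finishes by machine computation; for imprimitive transitive $G$ it uses $|G|\mid (k!)^l\,l!$ together with the congruences $D_n\equiv\pm1\pmod{n}$, $D_n\equiv\pm1\pmod{n-2}$, $D_n\equiv\pm2\pmod{n-3}$ and a base-$p$ carry analysis of when $\nu_p((k!)^l l!)\le\nu_p(n!)$ is an equality; for intransitive $G$ a similar digit argument applies to $u!\,v!$. Nothing in your plan substitutes for this, and there is no indication that the alternating identity alone excludes, say, a large primitive group or a wreath product $S_k\wr S_l$.

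There is also a concrete error in your treatment of the coset case: the interpretation of $f_k$ as the number of $G$-orbits on injective $k$-tuples preserved setwise by $\tau$ (hence a non-negative integer) is only valid when $\tau$ normalizes $G$, which an arbitrary coset representative in $S_n$ need not do. For instance, with $G=\langle(12)\rangle\le S_3$ and $\tau=(123)$ one computes $f_1=1/2$, while no $G$-orbit on $\{1,2,3\}$ is $\tau$-stable; so both the integrality and the orbit interpretation fail, and with them the monotonicity you intend to exploit. For the same reason, $\tau$ need not permute the $G$-orbits inside a $\langle G,\tau\rangle$-orbit, so the proposed reduction to the subgroup case does not go through as stated. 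The paper sidesteps this entirely by never interpreting the coset average as an orbit count: it uses only that $a/|C|=D_n/n!$ with $a\in\Z_{\ge 0}$ and $|C|=|G|$, which already yields $d_n\mid |G|$.
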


Elements of $S_n$ with no fixed points are called derangements.
Let $D_n$ be the number of derangements in $S_n$.
The right side of \eqref{E:C} is 
\[
    \frac{D_n}{n!} = \sum_{i=0}^n \frac{(-1)^i}{i!};
\]
see \cite{Stanley2012}*{Example~2.2.1}, for instance.
When the denominator of $D_n/n!$ in lowest terms is $n!$, 
the conclusion of Theorem~\ref{T:main} follows immediately, 
but controlling $\gcd(D_n,n!)$ in general is nontrivial. 
Our proof requires an irrationality measure for $e$, 
divisibility properties of $D_n$, 
and a bound on the orders of primitive permutation groups.

\begin{remark}
The proof shows also that for $n \ge 5$, if $C$ is not necessarily a coset but just any subset of $S_n$ having the same size as $G$, then \eqref{E:C} implies that $G$ is $A_n$ or $S_n$.  In fact, we prove that if a subgroup $G$ of $S_n$ has order divisible by the denominator of $D_n/n!$, then $G$ is $A_n$ or $S_n$.
\end{remark}

\begin{remark}
We also prove an analogue of Theorem~\ref{T:main} in which both appearances of $S_n$ on the right side of \eqref{E:C} are replaced by the alternating group $A_n$ for some $n \ge 7$; see Theorem~\ref{T:alternating}. 
But there are counterexamples for smaller alternating groups.
For example, the order~$10$ dihedral group in $A_5$ has the same proportion of derangements as $A_5$, namely $4/10=24/60$.
\end{remark}

\subsection{Application to monodromy}

Let $\F_q$ be the finite field of $q$ elements.
Let $f(T)\in\F_q[T]$ be a polynomial of degree $n$.
Birch and Swinnerton--Dyer \cite{Birch-Swinnerton-Dyer1959} define what it means for $f$ to be ``general'' and estimate the proportion of field elements in the image of a general $f$:
\[
   \frac{|f(\F_q)|}{q} = 1 - \sum_{i=0}^n\frac{(-1)^i}{i!} + O_n(q^{-1/2}).
\]

More generally, let $f\colon X\to Y$ be a degree~$n$ generically \'etale morphism of schemes of finite type over $\F_q$, with $Y$ geometrically integral.
The geometric and arithmetic monodromy groups $G$ and $A$ are subgroups of $S_n$ fitting in an exact sequence
\[
    1 \To G \To A \To \Gal(\F_{q^r}/\F_q) \To 1
\]
for some $r \ge 1$; see \cite{Entin2021}*{Section~4} for an exposition.
Let $C$ be the coset of $G$ in $A$ mapping to the Frobenius generator of $\Gal(\F_{q^r}/\F_q)$.
Let $M$ be a bound on the geometric complexity of $X$ and $Y$.
Assume that $Y(\F_q) \ne \emptyset$, which is automatic if $q$ is large relative to $M$.
Then the Lang--Weil bound implies  
\begin{equation}
\label{E:meaning of fraction}
\frac{|f(X(\F_q))|}{|Y(\F_q)|}=\frac{|\{\sigma\in C : \sigma\ \textup{has at least one fixed point}\}|}{|C|}+O_{n,M}(q^{-1/2});
\end{equation}
see \cite{Entin2021}*{Theorem 3}, for example.
In particular, if $G=S_n$, then
\begin{equation}
\frac{|f(X(\F_q))|}{|Y(\F_q)|}=1-\sum_{i=0}^n\frac{(-1)^i}{i!}+O_{n,M}(q^{-1/2}).
\label{E:image_point_count}
\end{equation}

We prove a \emph{converse}, that an estimate as
in \eqref{E:image_point_count}
on the proportion of points in the image
implies that the geometric monodromy group of $f$ is the full symmetric group $S_n$:

\begin{corollary}
Given $n$ and $M$, there exists an effectively computable constant $c=c(n,M)$ such that for any $f \colon X \to Y$
as above, with $\deg f=n$ and the complexities of $X$ and $Y$ bounded by $M$, if 
\[
    \frac{|f(X(\F_q))|}{|Y(\F_q)|} = 1 - \sum_{i=0}^n \frac{(-1)^{i}}{i!} + \epsilon, \quad \text{where $|\epsilon| < \frac{1}{n!} - c q^{-1/2}$,}
\]
then $G=S_n$. 
\label{Cor:application}
\end{corollary}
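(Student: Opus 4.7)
The plan is to translate the hypothesis on $|f(X(\F_q))|/|Y(\F_q)|$ into a statement about the proportion of derangements in the coset $C$, then invoke Theorem~\ref{T:main}. Let $\alpha = |\{\sigma\in C : \sigma\ \textup{has no fixed points}\}|/|C|$ be the derangement proportion in $C$, so that \eqref{E:meaning of fraction} reads
\[
    \frac{|f(X(\F_q))|}{|Y(\F_q)|} = 1 - \alpha + \delta, \qquad |\delta| \le C_{n,M}\,q^{-1/2},
\]
for some effectively computable constant $C_{n,M}$ coming from the Lang--Weil estimates. Combining this with the hypothesis of the corollary and recalling that $\sum_{i=0}^n(-1)^i/i! = D_n/n!$, I get
\[
    \left|\alpha - \frac{D_n}{n!}\right| \le |\delta| + |\epsilon| < C_{n,M}\,q^{-1/2} + \frac{1}{n!} - c\,q^{-1/2}.
\]
Choosing $c \colonequals C_{n,M}$ makes the right-hand side strictly less than $1/n!$.

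The next step is a rationality argument. Since $C$ is a coset of $G$ in $S_n$, we have $|C|=|G|$, and by Lagrange's theorem $|G|$ divides $n!$. Therefore both $\alpha$ and $D_n/n!$ are rational numbers whose denominators divide $n!$, so the quantity $n!\cdot(\alpha - D_n/n!)$ is an integer. A real number which is an integer of absolute value strictly less than $1$ must be zero, so $\alpha = D_n/n!$.

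At this point Theorem~\ref{T:main} applies directly: the coset $C$ of $G$ in $A \subseteq S_n$ is in particular a coset of $G$ in $S_n$, and its derangement proportion equals $D_n/n!$, so $G = S_n$, as desired. The only real ``obstacle'' here is conceptual: one has to recognize that the coarse analytic bound coming from Lang--Weil, once combined with the strong hypothesis $|\epsilon| < 1/n! - cq^{-1/2}$, is sharp enough to force \textbf{exact} equality of derangement proportions via the denominator argument. All substantive difficulty lies in Theorem~\ref{T:main} itself; the corollary is then extracted by this integrality-gap trick and a book-keeping of constants.
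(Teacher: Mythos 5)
Your proposal is correct and is exactly the argument the paper intends: its proof of the corollary is the single line ``Combine \eqref{E:meaning of fraction} and Theorem~\ref{T:main},'' and your write-up simply makes explicit the absorption of the Lang--Weil error into $c$ and the integrality gap (both $\alpha$ and $D_n/n!$ have denominator dividing $n!$) that upgrades the approximate equality to the exact hypothesis of Theorem~\ref{T:main}.
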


\begin{proof}
Combine \eqref{E:meaning of fraction} and Theorem~\ref{T:main}.
\end{proof}

\begin{remark}
We originally proved Corollary~\ref{Cor:application} in order to prove a version of \cite{Poonen-Slavov2020}*{Theorem~1.9}, about specialization of monodromy groups, but later we found a more natural argument.
\end{remark}

\subsection{Structure of the paper}
The proof of Theorem~\ref{T:main} occupies the rest of the paper, which is divided in sections
according to the properties of $G$.
Throughout, we assume that $G$, $C$, and $n$ are such that \eqref{E:C} holds.
The cases with $n \le 4$ can be checked directly,
so assume that $n \ge 5$ and $G \ne S_n$.

\section{Primitive permutation groups}
\label{S:primitive}

The proportion of derangements in $A_n$ is given by the inclusion-exclusion formula; 
it differs from $D_n/n!$ by the nonzero quantity $\pm(n-1)/n!$. 
The proportion for $S_n$ is the average of the proportions for $A_n$ and $S_n-A_n$,
so the proportion for $S_n-A_n$ also differs from $D_n/n!$.
Thus $G \ne A_n$.

Suppose that $G$ is primitive, $n \ge 5$, and $G \ne A_n,S_n$.
The main theorem in \cite{Praeger-Saxl1980}\footnote{This is independent of the classification of finite simple groups.  Using the classification, \cite{Maroti2002} gives better bounds.} 
gives $|G|<4^n$.
On the other hand, $D_n/n!$ is close to $1/e$ and hence cannot equal a rational number with small denominator; this will show that $|G|$ is at least about $\sqrt{n!}$.
These will give a contradiction for large $n$.
We now make this precise.

Let $a=|\{\sigma\in C : \sigma\ \textup{has no fixed points}\}|$ and $\orderofN=|C|=|G|$, so $a \le \orderofN = |G| < 4^n$.
Then
\[\left|\frac{a}{\orderofN}-\frac{1}{e}\right|=\left|\frac{D_n}{n!}-\frac{1}{e}\right|<\frac{1}{(n+1)!}.\]
No rational number with numerator $\le 4$ is within $1/6!$ of $1/e$, so $a \ge 5$.
By the main result of \cite{Okano1992} (see also \cite{Alzer1998}),
\[
   \left| e - \frac{\orderofN}{a} \right| > \frac{\log \log a}{3 a^2 \log a}.
\]
Combining the two displayed inequalities yields
\begin{equation}
\label{E:big inequality}
   \frac{1}{(n+1)!} > \left|\frac{a}{\orderofN}-\frac{1}{e}\right|
    = \frac{a}{\orderofN e}\left|e-\frac{\orderofN}{a}\right|
    > \frac{1}{\orderofN e}\cdot\frac{\log\log a}{3a\log a}
    > \frac{\log \log 4^n}{3 e (4^n)^2 \log 4^n};
\end{equation}
the last step uses that $a,\orderofN < 4^n$ and 
that $\dfrac{\log\log x}{x\log x}$ 
is decreasing for $x\geq 5$. 
Inequality~\eqref{E:big inequality} implies $n \le 41$.

Let $d_n$ be the denominator of the rational number $\dfrac{D_n}{n!} = \dfrac{a}{\orderofN}$.
Then $d_n \mid \orderofN$, so $d_n \le \orderofN < 4^n$.
For $11 < n \le 41$, the inequality $d_n < 4^n$ fails.
For $n \le 11$, a Magma computation \cite{Magma} shows that there are no degree $n$ primitive subgroups $G \ne A_n,S_n$ for which $d_n \mid \orderofN$.

\section{Imprimitive but transitive permutation groups}
\label{S:imprimitive}

Suppose that $G$ is imprimitive but transitive.
Then $G$ preserves a partition of $\{1,\ldots,n\}$ into $l$ subsets of equal size $k$, 
for some $k,l \ge 2$ with $kl=n$.  
The subgroup of $S_n$ preserving such a partition has order $(k!)^l l!$ (it is a wreath product $S_k \wr S_l$).
Thus $|G|$ divides $(k!)^l l!$.

For a prime $p$, let $\nu_p$ denote the $p$-adic valuation.
Since $\dfrac{a}{|G|} = \dfrac{D_n}{n!}$, every prime $p \nmid D_n$ satisfies $\nu_p(n!) \le \nu_p(|G|) \le \nu_p((k!)^l l!) \le \nu_p(n!)$.
Thus for every prime $p\nmid D_n$, the inequality
$ \nu_p((k!)^l l!) \le \nu_p(n!)$
is an equality.
The third of the three following lemmas will prove that this is impossible for $n \ge 5$.

\begin{lemma}
Let $k, l\geq 2$ and let $p$ be a prime. The inequality
\begin{equation}
\nu_p((k!)^l l!)\leq \nu_p((kl)!)
\label{nu_p_factorial_inequality}
\end{equation}
is an equality if and only if at least one of the following holds:
\begin{itemize}
\item $k$ is a power of $p$;
\item there are no carry operations in the $l$-term addition $k+\cdots+k$ when $k$ is written in base $p$  $($in particular, $l<p$$)$. 
\end{itemize}
\label{equality_case}
\end{lemma}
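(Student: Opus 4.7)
My plan is to translate the desired inequality into a statement about base-$p$ digit sums via Legendre's formula $\nu_p(n!) = (n - s_p(n))/(p-1)$, where $s_p(n)$ denotes the sum of the base-$p$ digits of $n$. A direct calculation gives
\[
\nu_p((kl)!) - \nu_p((k!)^l l!) \;=\; \frac{l(s_p(k) - 1) + s_p(l) - s_p(lk)}{p - 1},
\]
so \eqref{nu_p_factorial_inequality} together with the equality characterization is equivalent to proving $s_p(lk) \leq l(s_p(k) - 1) + s_p(l)$ and identifying when this is an equality.

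I would sandwich this between two elementary inequalities. First, $s_p(lk) \leq s_p(l)\,s_p(k)$: expanding $lk = \sum_{n}\bigl(\sum_{i+j=n} k_i l_j\bigr) p^n$ and resolving carries in base $p$, each carry reduces the cumulative digit sum by exactly $p-1$, while the uncarried formal sum has digit sum $s_p(k)\,s_p(l)$. Second, $s_p(l)\,s_p(k) \leq l(s_p(k) - 1) + s_p(l)$ rearranges to $(s_p(k) - 1)(l - s_p(l)) \geq 0$, which is immediate since both factors are nonnegative. Chaining the two yields the claimed inequality.

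Equality forces both sandwich inequalities to be tight. Tightness of the second factor forces $s_p(k) = 1$ (so $k$ is a power of $p$) or $l = s_p(l)$ (so $l < p$); tightness of the first forces the base-$p$ product $l \cdot k$ to have no carries. If $k = p^a$, then $s_p(lk) = s_p(l) = s_p(l)\,s_p(k)$ automatically, yielding the first bullet. If instead $l < p$ and $k$ is not a power of $p$, then $l$ has the single nonzero digit $l_0 = l$, so the no-carry condition collapses to $l\,k_i \leq p - 1$ for every base-$p$ digit $k_i$ of $k$ --- exactly the ``no carries in the $l$-term addition $k + \cdots + k$'' condition. Conversely, since $k \geq 2$ has some digit $k_i \geq 1$, that no-carry condition already forces $l < p$, matching the parenthetical in the lemma. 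I do not anticipate a genuine obstacle: the crucial observation is the factorization $(s_p(k) - 1)(l - s_p(l))$, after which the analysis of the equality cases is bookkeeping.
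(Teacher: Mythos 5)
Your proposal is correct and follows essentially the same route as the paper: both convert the statement via Legendre's formula to $l + s_p(kl) \le l\,s_p(k) + s_p(l)$, sandwich it through $s_p(kl) \le s_p(k)s_p(l)$ and the factorization $(s_p(k)-1)(l-s_p(l)) \ge 0$, and then read off the equality cases. The only cosmetic difference is that you phrase the tightness of the first inequality in terms of carries in the product $l\cdot k$ rather than directly in terms of the $l$-term addition $k+\cdots+k$, but when $l<p$ these coincide, exactly as you note.
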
 

\begin{proof}
Let $s_p(k)$ denote the sum of the $p$-adic digits of a positive integer $k$; then $\nu_p(k!)=\dfrac{k-s_p(k)}{p-1}$.
Thus equality in \eqref{nu_p_factorial_inequality} is equivalent to equality in
\begin{equation}
l+s_p(kl)\leq ls_p(k)+s_p(l).
\label{nu_p_digits_inequality}
\end{equation} 
We always have
\begin{equation}
    l+s_p(kl)\leq l+s_p(k)s_p(l)\leq ls_p(k)+s_p(l);
\label{E:two steps}
\end{equation}
the first follows from $s_p(kl)\leq s_p(k)s_p(l)$, 
and the second is simply 
\[
   (s_p(k)-1)(l-s_p(l))\geq 0.
\]
Thus equality in~\eqref{nu_p_digits_inequality} is equivalent to equality in both inequalities of~\eqref{E:two steps}.

The second inequality of~\eqref{E:two steps} is an equality 
if and only if either $k$ is a power of $p$ or $l<p$;
in each case, we must check when equality holds in the first inequality~\eqref{E:two steps},
i.e., when $s_p(kl) = s_p(k) s_p(l)$.
If $k$ is a power of $p$, then it holds. 
If $l<p$, then it holds if and only if $s_p(kl) = l s_p(k)$,
which holds if and only if there are no carry operations in the $l$-term addition 
$k+\cdots+k$ when $k$ is written in base $p$.
\end{proof}

The following lemma will help us produce primes $p$ not dividing $D_n$.

\begin{lemma} 
\label{L:supply_primes}
For $0 \le m \le n$, we have $D_n \equiv (-1)^{n-m} D_m \pmod{n-m}$.
In particular, 
\begin{align}
\label{E:D mod n} D_n &\equiv \pm 1\pmod{n}\\
\label{E:D mod n-2} D_n &\equiv \pm 1\pmod{n-2}\\
\label{E:D mod n-3} D_n &\equiv \pm 2\pmod{n-3}.
\end{align}
\end{lemma}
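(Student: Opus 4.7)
The plan is to prove the general congruence $D_n \equiv (-1)^{n-m} D_m \pmod{n-m}$ and then read off the three displayed cases by taking $m=0,2,3$ and using the values $D_0 = 1$, $D_2 = 1$, $D_3 = 2$.

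To handle the general case, set $k := n-m$. I would iterate the standard recurrence $D_j = jD_{j-1} + (-1)^j$ downward from $j=n$ to $j=m+1$, telescoping to
\[
D_n = \frac{n!}{m!}\, D_m + \sum_{j=0}^{k-1} (-1)^{n-j}\, \frac{n!}{(n-j)!},
\]
and then reduce this identity modulo $k$. The whole reduction rests on one elementary fact applied twice: any product of $k$ consecutive integers is divisible by $k$. First, $n!/m! = n(n-1)\cdots(m+1)$ is exactly such a product, so the coefficient of $D_m$ vanishes modulo $k$. Second, inside each surviving summand, $n-l \equiv m-l \pmod{k}$ for every $l$, so $n!/(n-j)!$ reduces to $m(m-1)\cdots(m-j+1) \pmod{k}$; this equals $m!/(m-j)!$ when $j\le m$ and equals $0$ literally (since $0$ is then one of the factors) when $j>m$.

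After the substitution $i = m-j$, what remains is a truncation of the defining expansion $D_m = \sum_i (-1)^i\, m!/i!$, scaled by the global sign $(-1)^{n-m}$. To close the argument I would check that the ``missing tail'' of this expansion, namely the terms with $m-i \ge k$, vanishes modulo $k$: each such $m!/i! = m(m-1)\cdots(i+1)$ is yet again a product of at least $k$ consecutive integers and so is divisible by $k$. Combining everything yields $D_n \equiv (-1)^{n-m} D_m \pmod{k}$, and the three special cases drop out immediately.

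I do not expect any genuine obstacle; the whole argument hinges on a single divisibility observation used twice, with light bookkeeping to cover the two regimes $k\le m$ and $k>m$ uniformly. The one subtlety worth flagging is that when $k>m$ the reduction silently exploits the fact that $m(m-1)\cdots(m-j+1)$ is $0$ as an integer (because $0$ lies among the factors), so no extra congruence manipulation is needed to kill those terms.
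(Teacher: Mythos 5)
Your proof is correct. It is essentially the same argument as the paper's, whose entire proof is ``reduce each term in $D_n=\sum_i(-1)^i\,n!/i!$ modulo $n-m$; most of them are $0$'' --- the paper splits the sum at $i=k$ so that the surviving terms are exactly those of $(-1)^kD_m$, whereas you reach the same term-by-term reduction via the recurrence and a telescoped identity, splitting at $i=m$ instead, which costs you the one extra (correctly handled) step of killing the tail of $D_m$'s expansion modulo $k$.
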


\begin{proof}
Reduce each term in $D_n$ modulo $n-m$; most of them are $0$.
\end{proof}

\begin{lemma}
Let $k,l\geq 2$. Set $n=kl$ and assume $n>4$. Then there exists a prime $p\nmid D_n$ such that
\[\nu_p((k!)^l l!)< \nu_p(n!).\]
\label{L:imprimitive_main_lemma}
\end{lemma}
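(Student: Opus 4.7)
The plan is to combine Lemma~\ref{equality_case} with Lemma~\ref{L:supply_primes}. The former says that the inequality $\nu_p((k!)^l l!) \le \nu_p(n!)$ is strict at a prime $p$ if and only if $k$ is not a power of $p$ \emph{and} a carry occurs in the $l$-term base-$p$ addition $k+\cdots+k$; the latter tells us that $p \nmid D_n$ whenever $p \mid n$, $p \mid n-2$, or ($p$ odd and $p \mid n-3$). So we need to locate a prime from one of these three pools satisfying both strictness conditions. We split on whether $n$ is a prime power.

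If $n$ is not a prime power, we look for a prime $p \mid l$ such that $k$ is not a power of $p$. Were no such $p$ to exist, every prime divisor of $l$ would force $k$ to be a power of it; since $k \ge 2$ can be a power of at most one prime, $l$ would then have a unique prime divisor $p$ and $k$ would likewise be a power of $p$, making $n = kl$ a prime power---contradiction. For the chosen $p$, $p \mid n$ yields $p \nmid D_n$, and $l \ge p$ forces a carry at the lowest nonzero base-$p$ digit of $k$, so both strictness conditions hold.

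If instead $n = p^a$ is a prime power (so $k = p^b$, $l = p^c$, $b,c \ge 1$), then $p$ itself only gives equality, and we draw an auxiliary prime $q$ from $n-2$ or $n-3$. When $l \ge 3$, take $q$ an odd prime divisor of $n-2$; such $q$ exists (for $p$ odd, $n-2$ is odd and $\ge 7$; for $p = 2$, $n-2 = 2(2^{a-1}-1)$ with $2^{a-1}-1 \ge 3$ odd) and automatically satisfies $q \ne p$, so $k = p^b$ is not a power of $q$. If there were no carry in the $l$-term base-$q$ addition of $k$, the units digit of $lk = n$ in base $q$ would equal $l \cdot (k \bmod q)$, forcing $l \cdot (k \bmod q) = n \bmod q = 2$; this is impossible for $l \ge 3$. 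When $l = 2$, necessarily $p = 2$, $n = 2^a$, $k = 2^{a-1}$ with $a \ge 3$; take $q$ an (odd) prime divisor of $n - 3 = 2^a - 3$, which is $\ge 5$ since $3 \nmid 2^a - 3$. The same argument forces $2 \cdot (k \bmod q) = n \bmod q = 3$, impossible by parity.

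The main obstacle is the prime-power case: once $n = p^a$, both $k$ and $l$ are powers of $p$, so the natural reservoir of primes dividing $n$ collapses, and we must extract an auxiliary prime from $n - 2$ or $n - 3$ and then \emph{force} a carry. The units-digit argument---using $n \bmod q \in \{2,3\}$ together with a parity or size mismatch---is the key input that handles this uniformly.
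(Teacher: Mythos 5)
Your proof is correct, and it takes a genuinely different route through the case analysis than the paper does, even though both arguments rest on the same two ingredients (Lemma~\ref{equality_case} and the congruences of Lemma~\ref{L:supply_primes}). The paper splits according to the value of $l$ and whether $n-2$ is a power of $2$; in its hardest case ($l>3$ and $n-2$ a power of $2$) it must show that $n-3$ is not a power of $3$, which requires solving the exponential Diophantine equation $3^u=2^v-1$. You instead split on whether $n$ is a prime power. In the non-prime-power case you extract a prime $p\mid l$ with $k$ not a power of $p$ (such a $p$ exists precisely because $n$ is not a prime power), and then $p\mid n$ gives $p\nmid D_n$ while $l\ge p$ forces a carry --- a clean use of the pool of primes dividing $n$ itself that the paper only exploits in passing. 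In the prime-power case the units-digit/parity argument with an odd prime from $n-2$ (for $l\ge 3$) or from $n-3$ (for $l=2$, where necessarily $n=2^a$) finishes the job; here the needed auxiliary prime always exists for elementary reasons ($n-2$ or $n-3$ is odd and $>1$, or equals $2(2^{a-1}-1)$ with $2^{a-1}-1\ge 3$), so no Diophantine input is needed. The trade-off is entirely in your favor on this lemma: your decomposition is shorter and avoids the appeal to the $3^u=2^v-1$ analysis. All the individual steps check out: the existence of $p\mid l$ with $k$ not a power of $p$, the deduction $q\ne p$ so that $k=p^b$ is not a power of $q$, the bound $q\ge 5$ from $3\nmid 2^a-3$, and the impossibility of $l\cdot(k\bmod q)$ equaling $2$ (for $l\ge3$) or $3$ (for $l=2$, by parity).
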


\begin{proof}

{\bfseries Case 1.  $l\geq 3$ and $n-2$ is not a power of $2$.}

Let $p\geq 3$ be a prime with $p\mid n-2$. 
By \eqref{E:D mod n-2}, $p\nmid D_n$, so $\nu_p((k!)^l l!) = \nu_p(n!)$.
Apply Lemma~\ref{equality_case}.
If $k$ is a power of $p$, then $p$ divides $k$, which divides $n$, 
so $p\mid n-(n-2)=2$, contradicting $p \ge 3$. 
Otherwise, there are no carry operations in the $l$-term addition $k+\cdots+k$ in base $p$.
This is impossible because the last digit of $n$ is $2$ (since $p\mid n-2$ and $p\geq 3$) and $l\geq 3$. 

\medskip

{\bfseries Case 2.  $l=2$.}

Then $2 \mid n$.
By \eqref{E:D mod n}, $2 \nmid D_n$.
By Lemma \ref{equality_case}, $k$ is a power of $2$ (since $l<2$ is violated).
Thus $n=2k$ is a power of $2$. 

Since $n \ge 5$, there exists a prime $p \mid n-3$. 
Since $n$ is a power of $2$, this implies $p \ge 5$.
By \eqref{E:D mod n-3}, $p\nmid D_n$. Apply Lemma~\ref{equality_case}. Note that $k$ is not a power of $p$, since $k$ is a power of $2$ and $p\neq 2$. Therefore, there are no carry operations in $k+k=n$,
so the last digit of $n$ is even.
But $p \mid n-3$ and $p \ge 5$, so the last digit of $n$ is $3$.

\medskip

{\bfseries Case 3. $l=3$ and $n-2$ is a power of $2$.}

Then $3\mid n$.
By \eqref{E:D mod n}, $3 \nmid D_n$.
By Lemma~\ref{equality_case}, $k$ must be a power of $3$ (since $l<3$ is violated). 
Then $n=3k$ is a power of $3$, contradicting the fact that $n$ is even. 

\medskip

{\bfseries Case 4. $l>3$ and $n-2$ is a power of $2$.}

In particular, $n = kl > 6$. 
Then $n-3$ is not a power of $3$, 
because otherwise we would have a solution to $3^u=2^v-1$ with $u>1$,
whereas the only solution in positive integers is $(u,v)=(1,2)$
(proof: $3\mid 2^v-1$, so $v$ is even, so $2^{v/2}-1$ and $2^{v/2}+1$ are powers of $3$ that differ by $2$, so they are $1$ and $3$).  

Let $p\neq 3$ be a prime divisor of $n-3$. 
Then $p\geq 5$. 
Apply \eqref{E:D mod n-3} and Lemma~\ref{equality_case}. 
If $k$ is a power of $p$, then $p\mid n$, so $p \mid n-(n-3)=3$, contradicting $p \ne 3$. 
Therefore, there are no carry operations in the $l$-term addition $k+\dots+k$. This is impossible, since the last digit of $kl$ is $3$ (since $p\mid n-3$ and $p\geq 5$) and $l>3$.
\end{proof}

\section{Intransitive permutation groups}
\label{S:intransitive}

Suppose that $G$ is intransitive.
Then $G$ embeds in $S_u \times S_v \subset S_n$ for some $u,v\ge 1$ with $u+v=n$.

Consider a prime $p\mid n$. By \eqref{E:D mod n}, $p\nmid D_n$.
Then, analogously to the second paragraph of Section~\ref{S:imprimitive}, 
$\nu_p(n!) \le \nu_p(|G|) \le \nu_p(u! \, v!) \le \nu_p(n!)$,
so $\nu_p(u!)  + \nu_p(v!) = \nu_p(n!)$; equivalently, $s_p(u) + s_p(v) = s_p(n)$.
So there are no carry operations in $u+v$. 
Let $e=\nu_p(n)$, so the last $e$ base $p$ digits of $n$ are zero;
then the same holds for $u$ and $v$.
In other words, $p^e\mid u,v$ as well. 
Since this holds for each $p\mid n$, we conclude that $n\mid u,v$.
This contradicts $0 < u,v < n$.

This completes the proof of Theorem~\ref{T:main}.


\section{Alternating group}

\begin{theorem}
Let $G$ be a subgroup of the symmetric group $S_n$ for some $n\geq 7$. Let $C$ be a coset of $G$ in $S_n$ having the same proportion of fixed-point-free elements as $A_n$. Then $G=A_n$.
\label{T:alternating}
\end{theorem}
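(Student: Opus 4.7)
The plan is to mirror the proof of Theorem~\ref{T:main}, substituting the rational number $D_n/n!$ by the proportion of derangements in $A_n$, which equals $D_n^+/(n!/2) = 2D_n^+/n!$, where $D_n^+ = (D_n + (-1)^{n-1}(n-1))/2$ via the classical identity $D_n^+ - D_n^- = (-1)^{n-1}(n-1)$. Setting $a = |\{\sigma \in C : \sigma \text{ is a derangement}\}|$ and $b = |G|$, the hypothesis becomes $a/b = 2D_n^+/n!$, so
\[
d'_n \;:=\; \frac{n!/2}{\gcd\!\bigl(D_n^+,\, n!/2\bigr)} \;\text{ divides }\; |G|.
\]
I would first rule out $G=S_n$: its unique coset $S_n$ has derangement proportion $D_n/n!$, which differs from $2D_n^+/n!$ by the nonzero quantity $(-1)^n(n-1)/n!$.

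For primitive $G \ne A_n, S_n$, I would repeat Section~\ref{S:primitive} using the Praeger--Saxl bound $|G| < 4^n$ and Okano's irrationality measure. The approximation to $1/e$ is only slightly weaker than before,
\[
\left|\frac{2D_n^+}{n!}-\frac{1}{e}\right| \;\le\; \frac{1}{(n+1)!}+\frac{n-1}{n!} \;<\; \frac{2}{(n-1)!},
\]
still yielding an explicit upper bound on $n$ (somewhat larger than the $41$ of Section~\ref{S:primitive}), after which a Magma enumeration analogous to the one there closes the primitive case.

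For transitive imprimitive and intransitive $G$, the arguments of Sections~\ref{S:imprimitive}--\ref{S:intransitive} carry over once Lemma~\ref{L:supply_primes} is replaced by its $2D_n^+$ analogue. Using $D_n \equiv (-1)^{n-m} D_m \pmod{n-m}$ together with $n-1 \equiv m-1 \pmod{n-m}$ gives
\begin{align*}
2D_n^+ &\equiv \pm 2 \pmod{n},\\
2D_n^+ &\equiv \pm 4 \pmod{n-3},\\
2D_n^+ &\equiv \pm 6 \pmod{n-4},\\
2D_n^+ &\equiv \pm 48 \pmod{n-5}
\end{align*}
(whereas $n-1$ and $n-2$ both divide $2D_n^+$). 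Consequently, every odd prime $p \mid n$, every odd prime $p \mid n-3$, every prime $p \ge 5$ with $p \mid n-4$, and every prime $p \ge 5$ with $p \mid n-5$ fails to divide $2D_n^+$; for each such $p$ the divisibility $d'_n \mid |G|$ forces $\nu_p(n!) \le \nu_p(|G|)$, which is fed into Lemma~\ref{equality_case} exactly as in the original proofs.

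The main obstacle will be the simultaneous loss of $p=2$ and of the primes dividing $n-1, n-2$ (all of which divide $2D_n^+$). In the intransitive case with $n$ even, the odd primes $p \mid n$ alone yield only that the odd part of $n$ divides both $u$ and $v$ in the decomposition $G \subseteq S_u \times S_v$, and a supplementary odd prime (drawn from $n-3$, $n-4$, or $n-5$, or a Bertrand-type prime in $(n/2,n)$ not dividing $2D_n^+$) is required to close the case. Likewise, the imprimitive analogue of Lemma~\ref{L:imprimitive_main_lemma} must be reorganised around the factorizations of $n$, $n-3$, and $n-4$, rather than $n$, $n-2$, and $n-3$; a small finite residue of exceptional $n$, absorbed into the Magma check from the primitive case, then completes the proof for $n \ge 7$.
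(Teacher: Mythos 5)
Your overall architecture matches the paper's: the quantity $2D_n^+$ is exactly the paper's $E_n\colonequals D_n+(-1)^{n-1}(n-1)$, your four congruences modulo $n$, $n-3$, $n-4$, $n-5$ agree with \eqref{E:n}--\eqref{E:n-5}, and the primitive case (Praeger--Saxl plus Okano plus a denominator check and a Magma sweep) is carried out the same way. The genuine gap is in the intransitive case with $n$ even. There the only constraint available from a prime $p\nmid E_n$ with $p\mid n-3$ (or $n-4$, $n-5$) is ``no carries in $u+v$ in base $p$,'' and this is \emph{not} a contradiction: the last base-$p$ digits of $u$ and $v$ can simply sum to $3$ (or $4$, or $5$). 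Likewise a Bertrand prime $p\in(n/2,n]$ with $p\nmid E_n$ only forces $\max(u,v)\ge p$, which is satisfiable. Concretely, for $n=8$ one has $E_8=2\cdot3\cdot7\cdot353$, so the denominator of $E_8/8!$ is $2^6\cdot3\cdot5$; the splittings $(u,v)=(1,7),(2,6),(3,5)$ pass every test your odd primes can impose and are ruled out only because $\nu_2(u!\,v!)\le 5 < 6$. So the prime $2$ cannot be dispensed with. The paper recovers it by observing that when $4\mid n$, \eqref{E:n} forces $\nu_2(E_n)=1$, whence $\nu_2(n!)-1\le\nu_2(u!\,v!)\le\nu_2(n!)$, i.e.\ at most one carry in $u+v$ in base $2$; combined with ``odd part of $n$ divides $u$ and $v$'' this forces $n/2\mid u,v$, hence $u=v$, and the configuration is then absorbed into the equal-blocks imprimitive case. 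Your proposal contains no substitute for this $2$-adic step.

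A secondary, smaller issue: in the imprimitive case you assert that only ``a small finite residue of exceptional $n$'' survives and can be absorbed into the Magma check. That is not automatic. The troublesome families (e.g.\ $n-3$ a power of $3$ with $n-5$ a power of $5$, arising when $l\in\{2,4\}$) are not finite a priori; the paper disposes of them by quoting the resolution of the exponential Diophantine equation $3^a-5^b=2$ from \cite{Brenner-Foster}, whose unique solution $(a,b)=(3,2)$ lands below the threshold $n>30$ handled by direct computation. You have the right congruence modulo $n-5$ in hand, but the proof needs this Diophantine input (or an equivalent) to be made explicit.
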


\begin{remark} 
For $n\leq 6$, the subgroups of $S_n$ other than $A_n$ for which some coset has the same proportion as $A_n$, up to conjugacy, are
\begin{itemize}
\item the order~$4$ subgroup of $S_4$ generated by $(1423)$ and $(12)(34)$;
\item the order~$4$ subgroup of $S_4$ generated by $(34)$ and $(12)(34)$;
\item the order~$8$ subgroup of $S_4$;
\item the subgroups of $S_5$ of order $5$, $10$, or $20$;
  \item the order~$36$ subgroup of $S_6$ generated by $(1623)(45)$, $(12)(36)$, $(124)(365)$, and $(142)(365)$;
 \item the order~$36$ subgroup of $S_6$ generated by $(13)(25)(46)$, $(14)(36)$, $(154)(236)$, and $(145)(236)$.
  \end{itemize}
\end{remark}

The proof of Theorem~\ref{T:alternating} follows the proof of Theorem~\ref{T:main};
we highlight only the differences.
The proportion of fixed-point-free elements in $A_n$ is $E_n/n!$, where 
$E_n\colonequals D_n+(-1)^{n-1}(n-1)$.

\subsection{Primitive permutation groups}
Suppose $G\neq A_n$.
The first paragraph of Section~\ref{S:primitive} shows that $G \ne S_n$.
For $7 \le n \le 13$, we use Magma to check Theorem~\ref{T:alternating} for each primitive subgroup of $S_n$.
So assume $n \ge 14$.  
Define $a$ and $b$ as in Section~\ref{S:primitive}.
We have
\[
\left|\frac{a}{b}-\frac{1}{e}\right| = \left| \frac{E_n}{n!} - \frac{1}{e} \right| \le \left| \frac{E_n-D_n}{n!} \right| + \left| \frac{D_n}{n!} - \frac{1}{e} \right|< 
\frac{n-1}{n!} + \frac{1}{(n+1)!} =\frac{n^2}{(n+1)!}.
\]
No $a/b$ with $a<5$ is within $15^2/16!$ of $1/e$, so $a \ge 5$.
Inequality~\eqref{E:big inequality} with $1/(n+1)!$ replaced by $n^2/(n+1)!$ implies $n \le 49$.

Let $e_n$ be the denominator of $E_n/n!$, so $e_n$ divides $|G|$, which is less than $4^n$.
But for $13<n\leq 49$, the inequality $e_n<4^n$ fails.

\subsection{Imprimitive permutation groups that preserve a partition into blocks of equal size}
\label{Sub:imprimitive}

To rule out imprimitive permutation groups that preserve a partition into $l$ blocks of size $k$, 
we argue as in Section~\ref{S:imprimitive}, but with Lemma~\ref{L:imprimitive_main_lemma} replaced by the following.

\begin{lemma}
Let $k,l\geq 2$. Set $n=kl$ and assume that $n>6$. Then there exists a prime $p\nmid E_n$ such that
\[\nu_p((k!)^l l!)<\nu_p(n!).\]
\label{Lem:strict_ineq}
\end{lemma}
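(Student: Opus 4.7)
The plan is to mirror the proof of Lemma~\ref{L:imprimitive_main_lemma}, finding for each factorization $n = kl$ with $n > 6$ a prime $p\nmid E_n$ for which Lemma~\ref{equality_case} forces strict inequality. The main obstacle is that from $E_n = D_n + (-1)^{n-1}(n-1)$ together with Lemma~\ref{L:supply_primes} one computes $E_n\equiv 0\pmod{n-1}$ and $E_n\equiv 0\pmod{n-2}$, and $E_n$ is always even, so the convenient prime sources $p = 2$ and $p\mid n-2$ used in the original proof are gone.

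First, I would establish the analogue of Lemma~\ref{L:supply_primes}: by the same telescoping reduction, $E_n \equiv \pm 2\pmod n$, $E_n\equiv\pm 4\pmod{n-3}$, $E_n\equiv\pm 6\pmod{n-4}$, and $E_n\equiv\pm 48\pmod{n-5}$. Consequently $p\nmid E_n$ whenever $p$ is an odd prime dividing $n$, a prime $p\geq 5$ dividing $n-3$ or $n-4$, or a prime $p\geq 7$ dividing $n-5$.

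Next, I would split on $l$. For $l\neq 3$, any prime $p\geq 5$ dividing $n-3$ gives strict inequality: in Lemma~\ref{equality_case}, $k$ being a power of $p$ would force $p\mid 3$, and the no-carry condition requires $lc_0=3$, which has no solution for $l\in\{2,4,5,6,\ldots\}$. For $l=3$, the prime $p=3$ handles everything unless $k$ is a power of $3$, i.e.\ $n=3^{c+1}$; then $n-4$ is coprime to $6$ and $\geq 5$, so admits a prime factor $p\geq 5$, which works because $3c_0=4$ has no solution. It remains to handle the degenerate case when $n-3$ is $3$-smooth. For $l\geq 5$ one falls back on $n-4$, and if both $n-3$ and $n-4$ are $3$-smooth then the enumeration of consecutive $3$-smooth integers leaves only $n = 12$, $k = 2$, $l = 6$, handled by $p = 7\mid n-5$. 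For $l = 4$, $4\mid n$ combined with $n-3 = 3^b$ forces $b$ to be even, so $n = 3 + 3^{2c}$; the case $c = 1$ (that is, $n = 12$) is covered by $p = 7\mid n-5$, and for $c\geq 2$ one uses $p = 3$, ruling out $k = 3^d$ by a short Diophantine argument on $4\cdot 3^d = 3 + 3^{2c}$.

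The subtlest step is the fallback for $l=2$, in which $n-3 = 3^b$ with $b\geq 2$ (the factor of $2$ in $n-3$ is killed by $n$ being even), so $n = 3 + 3^b$ and $3\mid n$. The prime $p = 3$ is the natural candidate, but $l = 2 < p = 3$ activates the no-carry alternative in Lemma~\ref{equality_case}, so one must exhibit a carry in the addition $k + k$ in base $3$. This is the main technical hurdle; it is settled by a short induction showing that $k = (3 + 3^b)/2$ has base-$3$ expansion $\underbrace{1\cdots 1}_{b-2}\,2\,0$, so the digit $2$ at position $1$ produces the required carry (and this same digit also rules out $k$ being a power of~$3$).
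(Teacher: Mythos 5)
Your proof is correct, and while it shares the paper's overall skeleton (the congruences for $E_n$ modulo $n$, $n-3$, $n-4$, $n-5$ derived from Lemma~\ref{L:supply_primes}, fed into Lemma~\ref{equality_case}), it diverges genuinely in how the cases are organized and, more importantly, in how the hardest case is closed. The paper first disposes of $6<n\leq 30$ by a direct computation and then assumes $n>30$; its case division is driven by whether $n-4$ or $n-3$ is a power of $2$, and in the critical case $l\in\{2,4\}$ with $n-3=3^b$ it escalates to the modulus $n-5$ and ultimately cites Brenner--Foster for the exponential Diophantine equation $3^a-5^b=2$. You instead split on the value of $l$ and on whether $n-3$ (or $n-4$) is $3$-smooth, treat all $n>6$ uniformly with $n=12$ as the only explicitly handled small case, and --- in the $l=2$, $n=3+3^b$ case --- stay at the prime $p=3$ and exhibit the required carry directly from the base-$3$ expansion of $k=(3+3^b)/2$, which is $1\cdots 1\,2\,0$ (the digit $2$ in position $1$ forces a carry in $k+k$ and simultaneously shows $k$ is not a power of $3$). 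This buys you an entirely elementary, citation-free argument in place of both the $n\leq 30$ computer check and the Brenner--Foster reference; the only external input you need is the classification of consecutive $3$-smooth integers (the pairs $(1,2),(2,3),(3,4),(8,9)$, reducing to $2^a+1=3^m$ and $3^m+1=2^a$), which is classical and elementary. I verified the remaining sketched steps: the no-carry alternative of Lemma~\ref{equality_case} fails whenever the last base-$p$ digit of $n$ is $3$ with $l\neq 3$ or is $4$ with $l\notin\{1,2,4\}$, the $l=3$ fallback to a prime $p\geq 5$ dividing $n-4$ (coprime to $6$ once $n$ is a power of $3$) is sound, the Diophantine argument $4\cdot 3^d=3+3^{2c}\Rightarrow c=1$ is correct, and for $n=12$ one checks $48\not\equiv 0\pmod 7$ and that the $l$-term addition carries in base $7$. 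The proposal is a sketch, but every step it waves at is routine and correct.
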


\begin{proof}[Proof of Lemma \ref{Lem:strict_ineq}]
For each integer $n\in (6,30]$, we check directly that there exists a prime $p\in (n/2,n]$ such that $p\nmid E_n$. Assume from now on that $n>30$.

Suppose the statement is false. Then whenever a prime $p$ satisfies $p\nmid E_n$, \eqref{nu_p_factorial_inequality} is an equality and  Lemma \ref{equality_case} applies. 

By using $D_n\equiv (-1)^{n-s}D_s\pmod{n-s}$ and $E_n=D_n+(-1)^{n-1}(n-1)$, we obtain
\begin{align}
 \label{E:n}   E_n &\equiv 2(-1)^n\pmod{n}\\
 \label{E:n-3}      E_n &\equiv 4(-1)^{n-1}\pmod{n-3}\\
 \label{E:n-4}      E_n &\equiv 6(-1)^n\pmod{n-4}\\
 \label{E:n-5}      E_n &\equiv (-1)^{n-1}2^4\times 3\pmod{n-5}
\end{align}

{\bfseries Case 1. $n-4$ is a power of $2$.}

Then $n-3$ is not a power of $3$ because otherwise, we have a solution to $3^u-1=2^v$ with $u \ge 3$;
working modulo~$4$ shows that $u$ is even, and factoring the left side leads to a contradiction.
Let $p\neq 3$ be a prime with $p\mid n-3$. Since $n-3$ is odd, $p\geq 5$. By \eqref{E:n-3}, $p\nmid E_n$, so we have one of the conclusions of Lemma~\ref{equality_case}.

If $k$ is a power of $p$, then $p\mid k\mid n$, which, combined with $p\mid n-3$ gives $p=3$, a contradiction. 

Suppose that there is no carry in $k+\cdots+k$ ($l$ terms). This sum has last digit $3$ in base $p$, so $l=3$, so $3\mid n$, and hence $3\nmid E_n$ by \eqref{E:n}. Apply Lemma \ref{equality_case} for the prime $3$. Since $l<3$ is violated, we deduce that $k$ is a power of $3$. Then $n=kl$ is also a power of $3$, but this contradicts the fact that $n$ is even.

\bigskip

{\bfseries Case 2. $n-3$ is a power of $2$ and $l\neq 2,4$.}

Then $n-4$ is odd and is not a power of $3$. Let $p\neq 3$ be a prime with $p\mid n-4$. Then $p\geq 5$, so $p\nmid E_n$ by \eqref{E:n-4}. If $k$ is a power of $p$, then $p\mid k\mid n$, which contradicts $p\mid n-4$ since $p \ge 5$. If there are no carry operations in the $l$-term addition $k+\cdots+k$ (which has last digit $4$ in base $p$), then $l=2$ or $l=4$, contrary to assumption.

\bigskip

{\bfseries Case 3. $l=3$.} 

Then $3\mid n$, hence $3\nmid E_n$ by \eqref{E:n}. Apply Lemma \ref{equality_case} for the prime $3$. Since $3<l$ is violated, $k$ is a power of $3$. 
Then $n=kl$ is also a power of $3$. Then $n-4$ is odd and not divisible by $3$. Let $q$ be a prime with 
$q\mid n-4$. Then $q\geq 5$, and hence $q\nmid E_n$ by
\eqref{E:n-4}. Since $k$ is a power of $3$, it is not a power of $q$. So there is no carry in $k+k+k$ in base $q$. But this sum has last digit $4$ in base $q$, which is a contradiction. 

\bigskip

{\bfseries Case 4. $l\neq 2,4$.} 

By the previous cases, we may assume in addition that $n-4$ and $n-3$ are not powers of $2$ and $l\neq 3$.

Let $p\neq 2$ be a prime with $p\mid n-3$. Then $p\nmid E_n$ by \eqref{E:n-3}.
Since the $l$-term addition $k+\cdots+k$ has last digit $3$ and $l\neq 3$, there is some carry. Therefore $k$ is a power of $p$. Then $p\mid k\mid n$, which, combined with $p\mid n-3$, gives 
$p=3$. In particular, $3\mid n$.

Let $q\neq 2$ be a prime with $q\mid n-4$. Since $3 \mid n$, we have $q\neq 3$ so $q\geq 5$. By \eqref{E:n-4}, $q\nmid E_n$. If $k$ is a power of $q$, then $q\mid n$, hence $q \mid 4$ --- contradiction. Therefore there is no carry in the $l$-term addition $k+\cdots+k$ in base $q$. This sum has last digit $4$ and $l\neq 2,4$, so this case is impossible.

\bigskip

{\bf Case 5. $l=2$ or $l=4$.} 

Then $n$ is even, so $n-3$ and $n-5$ are odd. 

\medskip

{\em Subcase 5.1: $n-3$ is not a power of $3$.}

Let $p\neq 3$ be a prime such that $p\mid n-3$. Then $p\geq 5$ and $p\nmid E_n$ by \eqref{E:n-3}. If $k$ is a power of $p$, then $p\mid k\mid n$, giving $p=3$, which is a contradiction. However, there is carry in the $l$-term addition $k+\cdots+ k$ because the sum has last digit $3$, and $l$ is $2$ or $4$.

\medskip

{\em Subcase 5.2: $n-3$ is a power of $3$ but $n-5$ is not a power of $5$.}

Let $p\neq 5$ be a prime with $p\mid n-5$. Then $p\geq 7$ and we apply the argument of subcase~5.1: an $l$-term sum $k+\cdots+k$ cannot have last digit $5$ in base $p$. 

\medskip

{\em Subcase 5.3: $n-3=3^a$ and $n-5=5^b$ for some $a,b\geq 1$.}

Then $3^a-5^b=2$, so $a=3$ and $b=2$ by \cite{Brenner-Foster}*{Theorem~4.06}.
This contradicts $n > 30$.
\end{proof}

\subsection{Intransitive subgroups}

As in Section \ref{S:intransitive}, $G$ embeds in 
$S_u \times S_v \subset S_n$ for some $u,v\ge 1$ with $u+v=n$.
Write $n=2^s m$, where $s\geq 0$ and $2\nmid m$. The argument in Section~\ref{S:intransitive} for odd $p$ with $E_n$ in place of $D_n$ and \eqref{E:n} in place of \eqref{E:D mod n} implies that $m\mid u,v$. Thus $s\geq 1$.

If $s=1$, then $n=2m$, so $u=v$. This case is covered in Section~\ref{Sub:imprimitive}.

Suppose that $s\geq 2$. Then $4 \mid n$, so \eqref{E:n} implies that $E_n/2$ is odd. Using
$\frac{a}{|G|}=\frac{E_n/2}{n!/2}$, we obtain
$\nu_2(n!/2)\leq \nu_2(|G|)\leq \nu_2(u!v!)\leq \nu_2(n!)$.
If the last inequality is an equality,
then the same argument used in Section~\ref{S:intransitive} shows that $\nu_2(u) = \nu_2(v) = \nu_2(n)$;
combining this with $m \mid u,v$ shows that $n \mid u,v$, a contradiction.
Therefore the first two inequalities must be equalities, so $\nu_2(u!v!)=\nu_2(n!)-1$; equivalently, $s_2(u)+s_2(v)=s_2(n)+1$. This means there is exactly one carry operation in $u+v$ in base $2$. This is possible only when $2^{s-1}\mid u,v$. Also, $m \mid u,v$, so $n/2 \mid u,v$, so again $u=v$, and this case is covered in Section~\ref{Sub:imprimitive}.

\section*{Acknowledgements}
We thank Andrew Sutherland for useful discussions concerning Section~\ref{S:primitive} and specifically for drawing our attention to~\cite{Okano1992}.
We thank Michael Bennett and Samir Siksek for suggesting references for the solution of $3^a-5^b=2$.
We also thank the referees for comments.

\begin{bibdiv}
\begin{biblist}


\bib{Alzer1998}{article}{
   author={Alzer, Horst},
   title={On rational approximation to $e$},
   journal={J. Number Theory},
   volume={68},
   date={1998},
   number={1},
   pages={57--62},
   issn={0022-314X},
   review={\MR{1492888}},
   doi={10.1006/jnth.1997.2199},
}

\bib{Birch-Swinnerton-Dyer1959}{article}{
   author={Birch, B. J.},
   author={Swinnerton-Dyer, H. P. F.},
   title={Note on a problem of Chowla},
   journal={Acta Arith.},
   volume={5},
   date={1959},
   pages={417--423},
   issn={0065-1036},
   review={\MR{113844}},
   doi={10.4064/aa-5-4-417-423},
}

\bib{Brenner-Foster}{article}{
   author={Brenner, J. L.},
   author={Foster,  Lorraine L.},
   title={Exponential Diophantine equations},
   journal={Pacific J. Math.},
   volume={101},
   date={1982},
   number={2},
   pages={263--301},
   issn={0030-8730}, 
   review={\MR{675401}},
   doi={10.2140/pjm.1982.101.263},
}

\bib{Entin2021}{article}{
   author={Entin, Alexei},
   title={Monodromy of hyperplane sections of curves and decomposition
   statistics over finite fields},
   journal={Int. Math. Res. Not. IMRN},
   date={2021},
   number={14},
   pages={10409--10441},
   issn={1073-7928},
   review={\MR{4285725}},
   doi={10.1093/imrn/rnz120},
}

\bib{Magma}{article}{
    author={Bosma, Wieb},
    author={Cannon, John},
    author={Playoust, Catherine},
     title={The Magma algebra system. I. The user language},
      note={Computational algebra and number theory (London, 1993).  Magma is available at \url{http://magma.maths.usyd.edu.au/magma/}\phantom{i}}, 
   journal={J. Symbolic Comput.},
    volume={24},
      date={1997},
    number={3-4},
     pages={235\ndash 265},
      issn={0747-7171},
    review={\MR{1484478}},
     label={Magma}, 
}

\bib{Maroti2002}{article}{
   author={Mar\'{o}ti, Attila},
   title={On the orders of primitive groups},
   journal={J. Algebra},
   volume={258},
   date={2002},
   number={2},
   pages={631--640},
   issn={0021-8693},
   review={\MR{1943938}},
   doi={10.1016/S0021-8693(02)00646-4},
}

\bib{Okano1992}{article}{
   author={Okano, Takeshi},
   title={A note on the rational approximations to $e$},
   journal={Tokyo J. Math.},
   volume={15},
   date={1992},
   number={1},
   pages={129--133},
   issn={0387-3870},
   review={\MR{1164191}},
   doi={10.3836/tjm/1270130256},
}

\bib{Poonen-Slavov2020}{article}{
   author={Poonen, Bjorn},
   author={Slavov, Kaloyan},
   title={The exceptional locus in the Bertini irreducibility theorem for a morphism},
   journal={Int.\ Math.\ Res.\ Notices},
   volume={rnaa182},
   date={2020-08-04},
   issn={1687-0247},
   doi={10.1093/imrn/rnaa182},
}

\bib{Praeger-Saxl1980}{article}{
   author={Praeger, Cheryl E.},
   author={Saxl, Jan},
   title={On the orders of primitive permutation groups},
   journal={Bull. London Math. Soc.},
   volume={12},
   date={1980},
   number={4},
   pages={303--307},
   issn={0024-6093},
   review={\MR{576980}},
   doi={10.1112/blms/12.4.303},
}

\bib{Stanley2012}{book}{
   author={Stanley, Richard P.},
   title={Enumerative combinatorics. Volume 1},
   series={Cambridge Studies in Advanced Mathematics},
   volume={49},
   edition={2},
   publisher={Cambridge University Press, Cambridge},
   date={2012},
   pages={xiv+626},
   isbn={978-1-107-60262-5},
   review={\MR{2868112}},
}

\end{biblist}
\end{bibdiv}

\end{document}